\title[A remark on torsion growth in homology]{A remark on torsion growth in homology\\ and volume of 3-manifolds}
\author{Holger Kammeyer}
\address{Karlsruhe Institute of Technology\\ Institute for Algebra and Geometry\\ Germany}
\email{holger.kammeyer@kit.edu}
\urladdr{www.math.kit.edu/iag7/~kammeyer/}
\subjclass[2010]{20E18, 57M27}
\newtheorem{theorem}{Theorem}
\newtheorem{corollary}{Corollary}
\newtheorem{proposition}{Proposition}
\newtheorem{conjecture}{Conjecture}
\newtheorem{question}{Question}
\newtheorem{introconj}{Conjecture}
\theoremstyle{definition}
\theoremstyle{remark}
\newtheorem{remark}{Remark}
\newtheorem{example}{Example}
   \let\c@corollary=\c@theorem
   \let\c@proposition=\c@theorem
   \let\c@lemma=\c@theorem
   \let\c@definition=\c@theorem
   \let\c@remark=\c@theorem
   \let\c@example=\c@theorem
   \let\c@equation=\c@theorem
   \let\c@conjecture=\c@theorem
   \let\c@question=\c@theorem
\newcommand*{\MRref}[2]{ \href{http://www.ams.org/mathscinet-getitem?mr=#1}{MR \textbf{#1}}}
\newcommand*{\arXiv}[1]{ \href{http://www.arxiv.org/abs/#1}{arXiv:\textbf{#1}}}
\newcommand*{\Z}{\mathbb Z}
\newcommand*{\Q}{\mathbb Q}
\newcommand*{\R}{\mathbb R}
\newcommand*{\C}{\mathbb C}
\DeclareMathOperator{\tr}{tr}
\DeclareMathOperator{\rank}{rank}
\DeclareMathOperator{\Hom}{Hom}
\DeclareMathOperator{\epi}{Epi}
\DeclareMathOperator{\vol}{vol}
\DeclarePairedDelimiter\floor{\lfloor}{\rfloor}
\newcounter{commentcounter}
\newcommand{\showcomments}{yes}
\newsavebox{\commentbox}
\newenvironment{com}%
{\ifthenelse{\equal{\showcomments}{yes}}%
{\footnotemark
        \begin{lrbox}{\commentbox}
        \begin{minipage}[t]{1.25in}\raggedright\sffamily\tiny
        \footnotemark[\arabic{footnote}]}
{\begin{lrbox}{\commentbox}}}
{\ifthenelse{\equal{\showcomments}{yes}}
{\end{minipage}\end{lrbox}\marginpar{\usebox{\commentbox}}}
{\end{lrbox}}}
\newcommand{\ignore}[1]{} 
\begin{document}

\begin{abstract}  
We show that L{\"u}ck's conjecture on torsion growth in homology implies that two 3-manifolds have equal volume if the fundamental groups have the same set of finite quotients.
\end{abstract} 

\maketitle

The purpose of this note is to relate two well-known open problems which both deal with a residually finite fundamental group \(\Gamma\) of an odd-dimensional aspherical manifold.  The first one \cite{Lueck:GAFA}*{Conjecture~1.12(2)} predicts that the \emph{\(\ell^2\)-torsion} \(\rho^{(2)}(\Gamma)\) determines the exponential rate at which torsion in middle-degree homology grows along a chain of finite index normal subgroups.

\begin{introconj} \label{conj:torsionapprox}
    Let \(M\) be an aspherical closed manifold of dimension \(2d + 1\).  Suppose that \(\Gamma = \pi_1 M\) is residually finite and let \(\Gamma = \Gamma_0 \ge \Gamma_1 \ge \cdots\) be any chain of finite index normal subgroups of \(\Gamma\) with \(\bigcap_{n=0}^\infty \Gamma_n = \{1\}\).  Then
    \[ \lim_{n \rightarrow \infty} \frac{\log |H_d(\Gamma_n)_{\textup{tors}}|}{[\Gamma : \Gamma_n]} = (-1)^d \rho^{(2)}(\Gamma).\]
\end{introconj}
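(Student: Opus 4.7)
The plan is to proceed along the classical three-step bridge between integer homology, Reidemeister torsion, and $\ell^2$-torsion via spectral approximation. First I would fix a $\Gamma$-equivariant CW structure on the universal cover $\widetilde M$ and look at the cellular chain complexes $C_\bullet(\Gamma_n \backslash \widetilde M; \Z)$. The Cheeger--M\"uller theorem identifies Reidemeister torsion with analytic torsion on each finite cover, and unwinding the definition of Reidemeister torsion in terms of elementary divisors of the boundary maps yields an identity of the schematic form
\[ \sum_{j} (-1)^j \log\bigl|H_j(\Gamma_n)_{\textup{tors}}\bigr| = -\tfrac{1}{2}\sum_{j} (-1)^j j \log \det\nolimits'\bigl(\Delta_j^{(n)}\bigr) + R_n, \]
where $R_n$ is a regulator contribution arising from the free parts of the homology groups. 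In the odd-dimensional aspherical setting, Poincar\'e duality concentrates torsion around the middle degree up to sign cancellations, and one expects $R_n$ to grow subexponentially in $[\Gamma:\Gamma_n]$.

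Second I would invoke spectral convergence. Since $\bigcap_n \Gamma_n = \{1\}$, the sequence of Riemannian covers $\Gamma_n \backslash \widetilde M$ Benjamini--Schramm converges to $\widetilde M$, so for every continuous $f$ compactly supported in $(0,\infty)$,
\[ \frac{1}{[\Gamma:\Gamma_n]} \tr f\bigl(\Delta_j^{(n)}\bigr) \longrightarrow \tr_\Gamma f\bigl(\Delta_j^{(2)}\bigr). \]
Taking $f(\lambda) = \log \lambda$ on a truncation $[\varepsilon,\infty)$ controls the truncated logarithmic determinants, and if one could send $\varepsilon \to 0$ the right-hand side of the torsion identity above would converge to $(-1)^d \rho^{(2)}(\Gamma)$, using that $\ell^2$-Betti numbers of odd-dimensional aspherical manifolds conjecturally vanish so that the alternating sum on the left collapses to the middle degree.

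The hard part will be uniform control of the small spectrum: one needs a bound of the form $F_j^{(n)}(\varepsilon) - F_j^{(n)}(0) \to 0$ as $\varepsilon \to 0$, uniformly in $n$, for the normalized spectral density functions of the $\Delta_j^{(n)}$. This is the notorious Lück-type spectral gap obstacle, and it is precisely the reason the conjecture remains open in general. It has been circumvented only in very special settings, most famously by Bergeron--Venkatesh for congruence covers of arithmetic hyperbolic 3-manifolds, where arithmeticity and the Selberg trace formula supply the required small-eigenvalue estimates. In the purely topological generality of the conjecture no such harmonic-analytic input is available, and bypassing this obstacle — perhaps via bounded cohomology, simplicial volume, or an entirely new source of integer control — is what a genuine proof would require.
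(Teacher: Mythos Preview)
The statement you are addressing is \emph{Conjecture~A} in the paper, not a theorem; the paper does not prove it and does not claim to. L\"uck's torsion approximation conjecture is open in general, and the paper merely \emph{assumes} it in order to deduce Conjecture~B (profiniteness of 3-manifold volume). So there is no ``paper's own proof'' to compare your proposal against.

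Your outline is a fair summary of the standard heuristic bridge (Cheeger--M\"uller, regulator terms, Benjamini--Schramm spectral convergence, then the small-eigenvalue problem), and you yourself correctly identify the genuine gap: the uniform control of the bottom of the spectrum as $\varepsilon \to 0$. That obstacle is exactly why the statement remains a conjecture. Two smaller points worth flagging: the vanishing of $\ell^2$-Betti numbers for odd-dimensional aspherical closed manifolds is itself the Singer conjecture and cannot be invoked as established fact in this generality; and subexponential growth of the regulator $R_n$ is likewise conjectural outside of special arithmetic or strongly acyclic settings. So your sketch has not one but three open inputs layered on top of each other, and none of them is bypassed here.
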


The term \(|H_d(\Gamma_n)_{\textup{tors}}|\) denotes the order of the torsion subgroup of \(H_d(\Gamma_n)\).  The \(\ell^2\)-torsion \(\rho^{(2)}(\Gamma)\) is the \(\ell^2\)-counterpart to Reidemeister torsion as surveyed in \cite{Lueck:L2Invariants} and \cite{Kammeyer:IntroL2}.  The second conjecture says that volume of 3-manifolds can be recovered from the finite quotients of the fundamental group.

\begin{introconj} \label{conj:volumeprofinite}
  Let \(\Gamma\) and \(\Lambda\) be infinite fundamental groups of connected, closed, orientable, irreducible 3-manifolds and suppose that \(\widehat{\Gamma} \cong \widehat{\Lambda}\).  Then
  \[ \vol(\Gamma) = \vol(\Lambda). \]
\end{introconj}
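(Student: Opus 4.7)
The plan is to prove the implication announced in the abstract: Conjecture~\ref{conj:torsionapprox} forces Conjecture~\ref{conj:volumeprofinite}. Suppose \(\widehat{\Gamma} \cong \widehat{\Lambda}\). The argument has three stages: transfer a suitable chain of finite-index normal subgroups across the profinite isomorphism; check that the orders of torsion in first integral homology agree along the two chains; and convert the resulting equality of \(\ell^2\)-torsions into an equality of volumes.

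For the first stage, I would use that closed orientable irreducible 3-manifold groups are residually finite (Hempel, combined with geometrization) to fix a descending chain \(\Gamma = \Gamma_0 \ge \Gamma_1 \ge \cdots\) of finite-index normal subgroups whose closures \(\overline{\Gamma_n}\) in \(\widehat{\Gamma}\) form a fundamental system of open neighbourhoods of the identity; in particular \(\bigcap_n \Gamma_n = \{1\}\). Via the fixed isomorphism \(\widehat{\Gamma} \cong \widehat{\Lambda}\) these correspond to open normal subgroups of \(\widehat{\Lambda}\), and setting \(\Lambda_n\) equal to the intersection with \(\Lambda\) produces a matched chain with \(\bigcap_n \Lambda_n = \{1\}\) and \([\Gamma : \Gamma_n] = [\Lambda : \Lambda_n]\).

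For the second stage, since \(\Gamma_n\) is finitely generated, profinite completion commutes with abelianization, and the torsion subgroup of the profinite completion of a finitely generated abelian group \(\Z^r \oplus T\) equals \(T\) (as \(\widehat{\Z}\) is torsion-free). Hence \(|H_1(\Gamma_n)_{\textup{tors}}|\) is determined by \(\widehat{\Gamma_n} = \overline{\Gamma_n}\), which is transported to \(\overline{\Lambda_n} = \widehat{\Lambda_n}\), yielding \(|H_1(\Gamma_n)_{\textup{tors}}| = |H_1(\Lambda_n)_{\textup{tors}}|\) for every \(n\). Closed orientable irreducible 3-manifolds with infinite fundamental group are aspherical by the sphere theorem, so Conjecture~\ref{conj:torsionapprox} applies with \(d = 1\) to both chains, giving
\[ \rho^{(2)}(\Gamma) = -\lim_{n \to \infty} \frac{\log |H_1(\Gamma_n)_{\textup{tors}}|}{[\Gamma : \Gamma_n]} = -\lim_{n \to \infty} \frac{\log |H_1(\Lambda_n)_{\textup{tors}}|}{[\Lambda : \Lambda_n]} = \rho^{(2)}(\Lambda). \]

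For the third stage, L{\"u}ck--Schick's computation of the \(\ell^2\)-torsion of hyperbolic 3-manifolds together with additivity of \(\rho^{(2)}\) along the geometric decomposition (the Seifert-fibered and Sol pieces contributing zero) identifies \(\rho^{(2)}(\pi_1 M)\) with \(-\tfrac{1}{6\pi}\vol(M)\), where \(\vol\) is the sum of hyperbolic volumes of the pieces. The equality \(\vol(\Gamma) = \vol(\Lambda)\) then follows. The main technical obstacle is the second stage: one needs a clean statement that \(|H_1(G)_{\textup{tors}}|\) for a finitely generated group \(G\) can be extracted purely from \(\widehat{G}\), and the chain in \(\Gamma\) must be chosen cofinally enough so that the matching via \(\widehat{\Gamma} \cong \widehat{\Lambda}\) is unambiguous and respects both inclusions and indices simultaneously.
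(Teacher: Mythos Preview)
Your proposal is correct and follows essentially the same route as the paper: transport a cofinal chain across the profinite isomorphism, identify \(\widehat{\Gamma_n}\) with \(\widehat{\Lambda_n}\) via Proposition~\ref{proposition:closureequalscompletion}, read off equality of \(|H_1(\,\cdot\,)_{\textup{tors}}|\), and then apply Conjecture~\ref{conj:torsionapprox} together with the L\"uck--Schick formula \(\rho^{(2)} = -\vol/6\pi\). The paper makes your two flagged technical points concrete by taking the canonical chain \(M_n = \bigcap_{[\Gamma:N]\le n} N\) (whose closures are intrinsic to \(\widehat{\Gamma}\) and hence transfer to the analogous chain in \(\Lambda\)) and by proving that \(H_1\) is a profinite invariant via Proposition~\ref{prop:h1epimorphism} and Corollary~\ref{cor:h1profinite}; your alternative argument for the latter, extracting the torsion from \(\widehat{H_1(\Gamma_n)} \cong \widehat{\Z}^r \oplus T\) using that \(\widehat{\Z}\) is torsion-free, is an equally valid shortcut.
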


Here the \emph{profinite completion} \(\widehat{\Gamma}\) of \(\Gamma\) is the projective limit over all finite quotients of \(\Gamma\).  Two groups have isomorphic profinite completions if and only if they have the same set of finite quotients \cite{Ribes-Zalesskii:ProfiniteGroups}*{Corollary~3.2.8}.  If \(\Gamma = \pi_1 M\) for a 3-manifold~\(M\) with the stated properties, then \emph{Thurston geometrization} applies to \(M\): there is a minimal choice of finitely many disjointly embedded incompressible tori in \(M\), unique up to isotopy, which cut \(M\) into pieces such that each piece carries one out of eight geometries.  The sum of the volumes of the hyperbolic pieces gives the well-defined quantity \(\vol(\Gamma)\).  Conjecture~\ref{conj:volumeprofinite} is often stated as a question \cite{Boileau-Friedl:ProfiniteCompletions}*{Question~3.18}.  But we dare to promote it to a conjecture in view of the following result.

\begin{theorem} \label{thm:mainthm}
  Conjecture~\ref{conj:torsionapprox} implies Conjecture~\ref{conj:volumeprofinite}.
\end{theorem}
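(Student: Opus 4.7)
The plan is to transport any exhausting chain of finite-index normal subgroups of \(\Gamma\) to a corresponding chain in \(\Lambda\) via the profinite isomorphism, verify that the resulting first-homology torsion groups have equal cardinality at every level, and then invoke Conjecture~\ref{conj:torsionapprox} together with the \(\ell^2\)-torsion--volume dictionary for 3-manifolds. Under the hypotheses of Conjecture~\ref{conj:volumeprofinite}, the manifolds \(M\) and \(N\) are aspherical (sphere theorem plus vanishing of \(H_3\) of the universal cover) and \(\Gamma, \Lambda\) are residually finite (Hempel, using geometrization and Agol), so Conjecture~\ref{conj:torsionapprox} applies to both with \(d=1\).

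For the transport step, I would fix an isomorphism \(f\colon \widehat{\Gamma}\xrightarrow{\cong}\widehat{\Lambda}\). Since \(\Gamma\) is finitely generated, \(\widehat{\Gamma}\) admits a countable descending basis \(W_0\supseteq W_1\supseteq\cdots\) of open normal subgroups with \(\bigcap_n W_n=\{1\}\). Setting \(\Gamma_n := W_n \cap \Gamma\) and \(\Lambda_n := f(W_n) \cap \Lambda\), density of \(\Gamma\) in \(\widehat{\Gamma}\) and of \(\Lambda\) in \(\widehat{\Lambda}\) yields canonical isomorphisms
\[\Gamma/\Gamma_n \;\cong\; \widehat{\Gamma}/W_n \;\xrightarrow[\;f\;]{\cong}\; \widehat{\Lambda}/f(W_n) \;\cong\; \Lambda/\Lambda_n,\]
and the embeddings \(\Gamma\hookrightarrow\widehat{\Gamma}\), \(\Lambda\hookrightarrow\widehat{\Lambda}\) force \(\bigcap_n \Gamma_n = \bigcap_n \Lambda_n = \{1\}\). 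Both chains thus satisfy the hypotheses of Conjecture~\ref{conj:torsionapprox}.

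The heart of the argument is then the identity \(|H_1(\Gamma_n)_{\textup{tors}}| = |H_1(\Lambda_n)_{\textup{tors}}|\) for every \(n\). A finite-index subgroup of a residually finite group inherits its own profinite topology from the ambient group, so \(W_n\) is the closure of \(\Gamma_n\) in \(\widehat{\Gamma}\) and is canonically isomorphic to \(\widehat{\Gamma_n}\), and likewise \(f(W_n)\cong\widehat{\Lambda_n}\). Restricting \(f\) therefore gives a profinite isomorphism \(\widehat{\Gamma_n}\cong\widehat{\Lambda_n}\). Since profinite completion of a finitely generated group commutes with abelianization---the maximal Hausdorff abelian quotient of \(\widehat{\Gamma_n}\) is \(\widehat{H_1(\Gamma_n)}\)---this descends to an isomorphism of profinite abelian groups \(\widehat{H_1(\Gamma_n)}\cong\widehat{H_1(\Lambda_n)}\). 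A finitely generated abelian group is determined up to isomorphism by its free rank and its torsion subgroup, both recoverable from the profinite completion, so the torsion orders coincide.

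With \([\Gamma:\Gamma_n]=[\Lambda:\Lambda_n]\) and matching torsion, applying Conjecture~\ref{conj:torsionapprox} with \(d=1\) to each chain yields \(\rho^{(2)}(\Gamma)=\rho^{(2)}(\Lambda)\). The equality of volumes then follows from the identity \(-\rho^{(2)}(\pi_1 M)=\tfrac{1}{6\pi}\vol(M)\), proved by L\"uck--Schick for closed hyperbolic 3-manifolds and extended to arbitrary closed, orientable, irreducible 3-manifolds via additivity of \(\ell^2\)-torsion along the tori of the JSJ decomposition together with vanishing of \(\rho^{(2)}\) on Seifert-fibered and Sol geometries. I expect the subtlest point to be the third paragraph: one must combine the identification of \(\widehat{\Gamma_n}\) with the closure of \(\Gamma_n\) in \(\widehat{\Gamma}\) and the compatibility of abelianization with profinite completion, both standard but easy to mishandle if the finite generation of \(\Gamma_n\) is overlooked.
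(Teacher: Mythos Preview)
Your proof is correct and follows essentially the same route as the paper: transport a descending chain of finite-index normal subgroups through the profinite isomorphism, use that closures of finite-index subgroups realize their profinite completions to match \(\widehat{\Gamma_n}\cong\widehat{\Lambda_n}\), deduce equality of \(H_1\)-torsion from profinite invariance of the abelianization, and finish via Conjecture~\ref{conj:torsionapprox} and the L\"uck--Schick formula \(\rho^{(2)}=-\vol/6\pi\). The only cosmetic differences are that the paper works with the canonical chain \(M_n=\bigcap_{[\Gamma:N]\le n}N\) rather than an arbitrary neighborhood basis, and proves profinite invariance of \(H_1\) by an explicit epimorphism argument rather than invoking compatibility of abelianization with completion.
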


The theorem seems to be folklore among the experts in the field but I could not find a proof in the literature so that this note is meant as a service to the community.

The contrapositive of Theorem~\ref{thm:mainthm} says that constructing two profinitely isomorphic 3-manifold groups with differing covolume would disprove Conjecture~\ref{conj:torsionapprox}.  Funar~\cite{Funar:TorusBundles} and Hempel~\cite{Hempel:Some3ManifoldGroups} constructed examples of closed 3-manifolds with non-isomorphic but profinitely isomorphic fundamental groups.  These examples carry \(\textit{Sol}\) and \(\mathbb{H}^2 \times \R\) geometry, respectively, and thus all have zero volume by definition.  Wilkes~\cite{Wilkes:ProfiniteRigidity} showed that Hempel's examples are the only ones among Seifert-fiber spaces.  It seems to be open whether there exist such examples with \(\mathbb{H}^3\)-geometry.  As a first step in the negative direction, Bridson and Reid \cite{Bridson-Reid:ProfiniteRigidity} showed that the figure eight knot group is determined among 3-manifold groups by the profinite completion.  

The paper at hand is divided into two sections.  Section~\ref{section:proof} presents the proof of Theorem~\ref{thm:mainthm}.  As a complement, Section~\ref{section:relatedconjectures} discusses how the related \emph{asymptotic volume conjecture} and the \emph{Bergeron--Venkatesh conjecture} fit into the picture.

I wish to thank S.\,Kionke and J.\,Raimbault for helpful discussions during the junior trimester program ``Topology'' at the Hausdorff Research Institute for Mathematics in Bonn.

\section{Proof of Theorem~\ref{thm:mainthm}} \label{section:proof}

For the moment, let \(\Gamma\) and \(\Lambda\) be any two finitely generated, residually finite groups.  To prepare the proof of Theorem~\ref{thm:mainthm}, we collect a couple of propositions from the survey article \cite{Reid:ProfiniteProperties} and include more detailed proofs for the sake of a self-contained treatment.  We first recall that the open subgroups of \(\widehat{\Gamma}\) are precisely the subgroups of finite index.  One direction is easy: \(\widehat{\Gamma}\) is compact and the cosets of an open subgroup form a disjoint open cover.  The converse is a deep theorem due to Nikolov and Segal \cite{Nikolov-Segal:ProfiniteGroups} that crucially relies on the assumption that \(\Gamma\) is finitely generated.  The proof moreover invokes the classification of finite simple groups.

  The assumption that \(\Gamma\) is residually finite says precisely that the canonical map \(\Gamma \rightarrow \widehat{\Gamma}\) is an embedding.  If \(Q\) is a finite group, the universal property of \(\widehat{\Gamma}\) says that the restriction map \(\Hom(\widehat{\Gamma}, Q) \rightarrow \Hom(\Gamma, Q)\) is a surjection.  By the above, the kernel of any homomorphism \(\widehat{\Gamma} \xrightarrow{\varphi} Q\) is open which implies that \(\varphi\) is continuous and is thus determined by the values on the dense subset \(\Gamma \subset \widehat{\Gamma}\).  Thus \(\Hom(\widehat{\Gamma}, Q) \rightarrow \Hom(\Gamma, Q)\) is in fact a bijection which clearly restricts to a bijection \(\epi(\widehat{\Gamma}, Q) \rightarrow \epi(\Gamma, Q)\) of surjective homomorphisms.  This has the following consequence.

\begin{proposition} \label{prop:h1epimorphism}
  If \(\Lambda\) embeds densely into \(\widehat{\Gamma}\), then there is an epimorphism \(H_1(\Lambda) \rightarrow H_1(\Gamma)\).
\end{proposition}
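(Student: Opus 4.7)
The plan is to produce the required epimorphism in two stages: first establish that $H_1(\Lambda)$ surjects onto every finite abelian quotient of $H_1(\Gamma)$, then promote this family of partial surjections to a single epimorphism $H_1(\Lambda) \twoheadrightarrow H_1(\Gamma)$ by means of the structure theorem for finitely generated abelian groups.

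For the first stage, take any epimorphism $H_1(\Gamma) \twoheadrightarrow Q$ onto a finite abelian group $Q$. Pre-composing with the abelianization $\Gamma \twoheadrightarrow H_1(\Gamma)$ yields an element of $\epi(\Gamma, Q)$, which by the bijection $\epi(\widehat{\Gamma}, Q) \cong \epi(\Gamma, Q)$ recalled above corresponds to a continuous epimorphism $\widehat{\Gamma} \twoheadrightarrow Q$. Restricting along the dense embedding $\Lambda \hookrightarrow \widehat{\Gamma}$ produces a homomorphism $\Lambda \to Q$ whose image is dense in the finite discrete group $Q$ and hence equals $Q$. Since $Q$ is abelian, this factors through an epimorphism $H_1(\Lambda) \twoheadrightarrow Q$. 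Specializing to $Q = H_1(\Gamma)/NH_1(\Gamma)$ for each $N \ge 1$, and factoring through the abelianization on the left, yields a compatible family of epimorphisms $H_1(\Lambda)/NH_1(\Lambda) \twoheadrightarrow H_1(\Gamma)/NH_1(\Gamma)$.

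For the second stage, write $H_1(\Gamma) = \Z^r \oplus T$ and $H_1(\Lambda) = \Z^s \oplus T'$ with $T, T'$ finite. Specializing the previous family to $N$ a large power of each prime $p$ in turn and comparing the invariant factor decompositions of the finite abelian $p$-groups on the two sides forces the numerical inequalities between ranks and elementary divisors required by the classical criterion for the existence of an epimorphism of finitely generated abelian groups; the desired surjection $H_1(\Lambda) \twoheadrightarrow H_1(\Gamma)$ then follows. This second stage is where I expect the main obstacle to lie: a naive passage to the inverse limit of the maps $H_1(\Lambda)/NH_1(\Lambda) \twoheadrightarrow H_1(\Gamma)/NH_1(\Gamma)$ only produces a continuous surjection $\widehat{H_1(\Lambda)} \twoheadrightarrow \widehat{H_1(\Gamma)}$ of profinite completions, which \emph{a priori} need not descend to a map of the underlying abstract groups. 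It is the finitely-generated-abelian structure on both sides that forces this descent, via a prime-by-prime bookkeeping of elementary divisors.
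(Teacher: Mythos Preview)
Your argument is correct, but it takes a longer route than the paper's. Your first stage---pulling back an arbitrary finite abelian quotient of $H_1(\Gamma)$ through $\widehat{\Gamma}$ and restricting to the dense subgroup $\Lambda$---is exactly the mechanism the paper uses. The divergence is in the second stage: you assemble the whole compatible family $H_1(\Lambda)/N \twoheadrightarrow H_1(\Gamma)/N$, pass to the surjection $\widehat{H_1(\Lambda)} \twoheadrightarrow \widehat{H_1(\Gamma)}$, and then invoke an elementary-divisor comparison to descend to a surjection of the underlying abstract groups. This works, and your diagnosis of where the real content lies is accurate, but the bookkeeping you allude to (that $\widehat{A}\twoheadrightarrow\widehat{B}$ forces $A\twoheadrightarrow B$ for finitely generated abelian $A,B$) is left as an appeal to a ``classical criterion'' rather than carried out.

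The paper sidesteps this entirely with a single clever choice. Writing $r=\dim_\Q H_1(\Gamma;\Q)$, it picks \emph{one} prime $p$ not dividing $|H_1(\Lambda)_{\textup{tors}}|$ and considers only the single finite quotient $(\Z/p\Z)^r\oplus H_1(\Gamma)_{\textup{tors}}$. Your first-stage argument then yields an epimorphism $\Lambda\to(\Z/p\Z)^r\oplus H_1(\Gamma)_{\textup{tors}}$. The point of the coprimality condition is that the composite $\Lambda\to(\Z/p\Z)^r$ kills $H_1(\Lambda)_{\textup{tors}}$, hence factors through the free abelian quotient of $H_1(\Lambda)$ and therefore lifts to $\Z^r$. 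Combining with the torsion component gives the desired $\Lambda\twoheadrightarrow\Z^r\oplus H_1(\Gamma)_{\textup{tors}}\cong H_1(\Gamma)$ directly, with no inverse-limit or structure-theorem gymnastics. Your approach is more systematic and perhaps conceptually cleaner (it really shows $\widehat{H_1(\Lambda)}\twoheadrightarrow\widehat{H_1(\Gamma)}$), while the paper's is shorter and produces the epimorphism explicitly.
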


\begin{proof}
Let \(p\) be a prime number which does not divide the group order \(|H_1(\Lambda)_{\textup{tors}}|\) and let us set \(r = \dim_\Q H_1(\Gamma; \Q)\).  It is apparent that we have an epimorphism \(\Gamma \rightarrow (\Z/p\Z)^r \oplus H_1(\Gamma)_{\textup{tors}}\).  By the above remarks, this epimorphism extends uniquely to an epimorphism \(\widehat{\Gamma} \rightarrow (\Z/p\Z)^r \oplus H_1(\Gamma)_{\textup{tors}}\).  Since \(\Lambda\) embeds densely into \(\widehat{\Gamma}\), the latter map restricts to an epimorphism \(\Lambda \rightarrow (\Z / p\Z)^r \oplus H_1(\Gamma)_{\textup{tors}}\).  This epimorphism must lift to an epimorphism \(\Lambda \rightarrow \Z^r \oplus H_1(\Gamma)_{\textup{tors}} \cong H_1(\Gamma)\) because \(p\) is coprime to \(|H_1(\Lambda)_{\textup{tors}}|\).  Of course this last epimorphism factors through the abelianization \(H_1(\Lambda)\).
\end{proof}

\begin{corollary} \label{cor:h1profinite}
  The abelianization is a profinite invariant: if \(\widehat{\Gamma} \cong \widehat{\Lambda}\), then \(H_1(\Gamma) \cong H_1(\Lambda)\).
\end{corollary}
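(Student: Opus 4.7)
The plan is to apply Proposition~\ref{prop:h1epimorphism} symmetrically and then upgrade the two resulting epimorphisms to an isomorphism using the Hopfian property of finitely generated abelian groups.

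First I would observe that any isomorphism \(\widehat{\Gamma} \cong \widehat{\Lambda}\) provides, via the canonical dense embeddings \(\Gamma \hookrightarrow \widehat{\Gamma}\) and \(\Lambda \hookrightarrow \widehat{\Lambda}\), a dense embedding of \(\Lambda\) into \(\widehat{\Gamma}\) and, symmetrically, a dense embedding of \(\Gamma\) into \(\widehat{\Lambda}\). Proposition~\ref{prop:h1epimorphism}, applied in each direction, then yields epimorphisms
\[ H_1(\Lambda) \twoheadrightarrow H_1(\Gamma) \qquad\text{and}\qquad H_1(\Gamma) \twoheadrightarrow H_1(\Lambda). \]

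Next I would invoke the assumption that \(\Gamma\) and \(\Lambda\) are finitely generated, so that \(H_1(\Gamma)\) and \(H_1(\Lambda)\) are finitely generated abelian groups. Composing the two epimorphisms produces a surjective endomorphism of, say, \(H_1(\Gamma)\). Since finitely generated abelian groups are Hopfian (a standard consequence of the structure theorem, or equivalently of Noetherianity of \(\Z\)), this composition must be an isomorphism. It follows that each of the two epimorphisms is in fact an isomorphism, giving \(H_1(\Gamma) \cong H_1(\Lambda)\).

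There is essentially no obstacle here once Proposition~\ref{prop:h1epimorphism} is in hand; the only subtle point is remembering that the residual finiteness and finite generation hypotheses standing from the beginning of the section are precisely what is needed to promote the two-sided surjection into an isomorphism via the Hopfian argument.
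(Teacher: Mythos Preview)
Your proof is correct and follows essentially the same approach as the paper: apply Proposition~\ref{prop:h1epimorphism} in both directions and then use the structure of finitely generated abelian groups to upgrade the two surjections to an isomorphism. The only cosmetic difference is that the paper argues directly by comparing free ranks and torsion orders, whereas you invoke the Hopfian property of finitely generated abelian groups; both are standard and amount to the same thing.
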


\begin{proof}
  Since we have surjections in both directions the groups \(H_1(\Gamma)\) and \(H_1(\Lambda)\) have the same free abelian rank.  Thus either surjection restricts to an isomorphism of the free parts and thus induces a surjection of the finite torsion quotients---which then must be a bijection.
\end{proof}

Let us now endow \(\Gamma\) with the subspace topology of \(\widehat{\Gamma}\), called the \emph{profinite topology} of \(\Gamma\).  For the open subgroups of \(\Gamma\) we have the same situation as we observed for \(\widehat{\Gamma}\).

\begin{proposition} \label{prop:profinitetopology}
  A subgroup \(H \le \Gamma\) is open in the profinite topology if and only if \(H\) has finite index in \(\Gamma\).
\end{proposition}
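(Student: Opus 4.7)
The plan is to prove both implications directly from the definition of the profinite topology on \(\Gamma\) as the subspace topology induced from \(\widehat{\Gamma}\), using two ingredients already in hand: first, that a basis of neighborhoods of \(1\) in \(\widehat{\Gamma}\) is given by its open subgroups (this is built into the construction of \(\widehat{\Gamma}\) as a projective limit of finite groups), and second, the bijection \(\Hom(\widehat{\Gamma}, Q) \to \Hom(\Gamma, Q)\) for finite \(Q\) established in the paragraph preceding Proposition~\ref{prop:h1epimorphism}.

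For the forward direction, suppose \(H \le \Gamma\) is open in the profinite topology. Then \(H\) contains an open neighborhood of the identity, and such a neighborhood must contain a basic one of the form \(\Gamma \cap N\) where \(N\) is an open (equivalently finite-index) subgroup of \(\widehat{\Gamma}\). The natural injection \(\Gamma/(\Gamma \cap N) \hookrightarrow \widehat{\Gamma}/N\) then shows that \(\Gamma \cap N\) has finite index in \(\Gamma\), and a fortiori so does \(H\).

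For the converse, assume \(H\) has finite index in \(\Gamma\). Pass to its normal core \(H' = \bigcap_{\gamma \in \Gamma} \gamma H \gamma^{-1}\), which is normal of finite index. The quotient \(\varphi \colon \Gamma \to \Gamma/H'\) lands in a finite group and therefore extends uniquely to a continuous homomorphism \(\widehat{\varphi} \colon \widehat{\Gamma} \to \Gamma/H'\); its kernel \(N\) is an open subgroup of \(\widehat{\Gamma}\). Because the extension restricts to \(\varphi\) on the dense subgroup \(\Gamma\), we have \(\Gamma \cap N = \ker \varphi = H'\), so \(H'\) is open in the profinite topology on \(\Gamma\). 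Since \(H \supseteq H'\), the subgroup \(H\) contains an open neighborhood of the identity; being a union of cosets of \(H'\) (each a translate of an open set), \(H\) itself is open.

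There is really no substantive obstacle here; the one point requiring care is verifying that the kernel of the extension \(\widehat{\varphi}\) meets \(\Gamma\) in exactly \(H'\), which follows from the uniqueness in the bijection \(\Hom(\widehat{\Gamma}, \Gamma/H') \to \Hom(\Gamma, \Gamma/H')\). Note that the Nikolov--Segal theorem, which identifies open with finite-index subgroups of \(\widehat{\Gamma}\), is not needed in this proposition: in both directions we produce explicit open subgroups of \(\widehat{\Gamma}\) (either given as basic neighborhoods of the identity or obtained as kernels of maps to finite groups).
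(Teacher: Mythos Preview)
Your proof is correct and follows essentially the same approach as the paper: both arguments pass to the normal core for the finite-index-implies-open direction and use that basic neighborhoods of \(1\) in the profinite topology are (intersections with) finite-index normal subgroups for the converse. The only cosmetic difference is that the paper first records the subbase of the profinite topology on \(\Gamma\) explicitly (cosets of finite-index normal subgroups) and reads off both implications from that description, whereas you route the converse through the \(\Hom\)-bijection to exhibit \(H'\) as \(\Gamma \cap \ker \widehat{\varphi}\); these are two phrasings of the same fact.
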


\begin{proof}
  Recall that \(\widehat{\Gamma}\) carries the coarsest topology under which the projections \(\widehat{\Gamma} \rightarrow \Gamma / \Gamma_i\) for finite index normal subgroups \(\Gamma_i \trianglelefteq \Gamma\) are continuous.  Since the compositions \(\Gamma \rightarrow \widehat{\Gamma} \rightarrow \Gamma / \Gamma_i\) are the canonical projections, it follows that a subbase for the subspace topology of \(\Gamma \subset \widehat{\Gamma}\) is given by the cosets of finite index normal subgroups of \(\Gamma\).

  If \(H\) has finite index in \(\Gamma\), then so does the \emph{normal core} \(N = \bigcap_{g \in \Gamma} gHg^{-1}\) because \(N\) is precisely the kernel of the permutation representation of \(\Gamma\) on the homogeneous set \(\Gamma / H\) defined by left translation.  Thus \(H = \bigcup_{h \in H} hN\) is open.  Conversely, let \(H \le \Gamma\) be open. Then \(H\) is a union of finite intersections of finite index normal subgroups of \(\Gamma\).  In particular \(H\) contains a finite index subgroup, whence has finite index itself.
\end{proof}

  \begin{proposition} \label{prop:11correspondence}
    Taking closure \(H \mapsto \overline{H}\) in \(\widehat{\Gamma}\) defines a 1-1--correspondence from the open (or finite index) subgroups of \(\Gamma\) to the open (or finite index) subgroups of \(\widehat{\Gamma}\).  The inverse is given by intersection \(H \mapsto H \cap \Gamma\) with \(\Gamma\).  This correspondence preserves the index, sends a normal subgroup \(N \trianglelefteq \Gamma\) to a normal subgroup \(\overline{N} \trianglelefteq \widehat{\Gamma}\), and in the latter case we have \(\widehat{\Gamma} / \overline{N} \cong \Gamma / N\).
  \end{proposition}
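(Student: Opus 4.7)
The plan is to construct the correspondence in both directions and check they are mutually inverse. The recurring tool is that \(\Gamma\) is dense in \(\widehat{\Gamma}\), so that for any clopen set \(K \subseteq \widehat{\Gamma}\) one has \(\overline{K \cap \Gamma} = K\); in particular this will apply whenever \(K\) is an open subgroup of \(\widehat{\Gamma}\), since every open finite index subgroup is automatically closed (its complement is a finite union of cosets).

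First I would treat \(H \mapsto \overline{H}\). Given a finite index subgroup \(H \le \Gamma\), let \(N = \bigcap_{g \in \Gamma} gHg^{-1}\) be its normal core, which is itself a finite index normal subgroup of \(\Gamma\) by the argument recalled in the proof of Proposition~\ref{prop:profinitetopology}. The projection \(\Gamma \to \Gamma/N\) extends by the universal property of \(\widehat{\Gamma}\) to a continuous surjection \(\pi_N \colon \widehat{\Gamma} \to \Gamma/N\), and \(K := \pi_N^{-1}(H/N)\) is a clopen subgroup of \(\widehat{\Gamma}\) with \(K \cap \Gamma = H\) and \([\widehat{\Gamma} : K] = [\Gamma : H]\). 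Since \(K\) is closed and contains \(H\) we have \(\overline{H} \subseteq K\); since \(K\) is open and \(K \cap \Gamma = H\) is dense in \(K\), also \(K \subseteq \overline{H}\). Hence \(\overline{H} = K\) is open, has the same index as \(H\), and recovers \(H\) as \(\overline{H} \cap \Gamma\).

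For the inverse, take an open subgroup \(U \le \widehat{\Gamma}\). The theorem of Nikolov--Segal recalled before Proposition~\ref{prop:h1epimorphism} gives \([\widehat{\Gamma}:U] < \infty\); consequently \(U\) is also closed, and \([\Gamma : U \cap \Gamma] \le [\widehat{\Gamma} : U]\) so \(U \cap \Gamma\) is of finite index in \(\Gamma\). Applying the previous paragraph to \(U \cap \Gamma\) yields \(\overline{U \cap \Gamma} \cap \Gamma = U \cap \Gamma\); on the other hand \(U \cap \Gamma\) is dense in the clopen set \(U\), so \(\overline{U \cap \Gamma} = U\). Combining both computations, \(H \mapsto \overline{H}\) and \(U \mapsto U \cap \Gamma\) are inverse bijections that preserve the index by construction.

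It remains to handle normality. If \(N \trianglelefteq \Gamma\) is a finite index normal subgroup, then for every \(\gamma \in \Gamma\) conjugation by \(\gamma\) is a homeomorphism of \(\widehat{\Gamma}\) that fixes \(N\), hence fixes \(\overline{N}\); by density of \(\Gamma\) and continuity of conjugation, every \(g \in \widehat{\Gamma}\) satisfies \(g \overline{N} g^{-1} = \overline{N}\). The map \(\pi_N\) built in the second paragraph has kernel an open normal subgroup intersecting \(\Gamma\) in \(N\), so by the bijection this kernel is exactly \(\overline{N}\), and \(\pi_N\) descends to the desired isomorphism \(\widehat{\Gamma}/\overline{N} \cong \Gamma/N\). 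The only non-routine ingredient anywhere in the argument is the appeal to Nikolov--Segal for the \emph{opposite} implication ``open \(\Rightarrow\) finite index'' in \(\widehat{\Gamma}\); everything else is formal consequence of density and the universal property of the profinite completion.
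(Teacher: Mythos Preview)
The paper does not supply its own proof here; it simply cites Ribes--Zalesskii, Proposition~3.2.2. Your argument is a correct self-contained proof and is essentially the standard one, so in that sense you have gone beyond what the paper does.

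There is, however, one point of confusion worth flagging. You invoke Nikolov--Segal to deduce \([\widehat{\Gamma}:U] < \infty\) from the assumption that \(U\) is open, and in your final paragraph you describe this implication ``open \(\Rightarrow\) finite index'' as the non-routine ingredient furnished by Nikolov--Segal. This is backwards: as the paper recalls just before Proposition~\ref{prop:h1epimorphism}, open \(\Rightarrow\) finite index is the \emph{easy} direction (compactness: the cosets of an open subgroup form a disjoint open cover of \(\widehat{\Gamma}\)), while Nikolov--Segal is the deep converse, finite index \(\Rightarrow\) open. Since your inverse map is only defined starting from an \emph{open} \(U \le \widehat{\Gamma}\), your argument as written never actually uses Nikolov--Segal. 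The theorem is needed only to justify the parenthetical ``(or finite index)'' on the \(\widehat{\Gamma}\) side of the statement, i.e.\ to guarantee that an arbitrary finite-index subgroup of \(\widehat{\Gamma}\) is already open and hence captured by your correspondence. With this correction your proof stands.
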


  The proof is given in \cite{Ribes-Zalesskii:ProfiniteGroups}*{Prop.\,3.2.2, p.\,84}.  Here is an easy consequence.
  \begin{corollary}
    For \(H_1, H_2 \le \Gamma\) of finite index we have \(\overline{H_1 \cap H_2} = \overline{H_1} \cap \overline{H_2}\).
  \end{corollary}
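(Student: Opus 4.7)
The plan is to exploit the bijective correspondence from Proposition~\ref{prop:11correspondence} between finite index subgroups of \(\Gamma\) and open (equivalently finite index) subgroups of \(\widehat{\Gamma}\) in both directions.

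First I would dispose of the easy inclusion \(\overline{H_1 \cap H_2} \subseteq \overline{H_1} \cap \overline{H_2}\): since \(\overline{H_1}\) and \(\overline{H_2}\) are closed in \(\widehat{\Gamma}\), so is their intersection, and it contains \(H_1 \cap H_2\); the closure of \(H_1 \cap H_2\) is therefore contained in it.

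For the reverse inclusion, I would observe that \(\overline{H_1} \cap \overline{H_2}\) is open in \(\widehat{\Gamma}\) (each factor is open, being of finite index in the compact group \(\widehat{\Gamma}\)) and of finite index in \(\widehat{\Gamma}\) (bounded above by the product of the individual indices). By Proposition~\ref{prop:11correspondence}, it is therefore the closure of its intersection with \(\Gamma\), namely
\[\overline{H_1} \cap \overline{H_2} \;=\; \overline{(\overline{H_1} \cap \overline{H_2}) \cap \Gamma}.\]
Distributing the intersection with \(\Gamma\) and applying the inverse of the correspondence \(\overline{H_i} \cap \Gamma = H_i\) yields
\[(\overline{H_1} \cap \overline{H_2}) \cap \Gamma = (\overline{H_1} \cap \Gamma) \cap (\overline{H_2} \cap \Gamma) = H_1 \cap H_2,\]
so \(\overline{H_1} \cap \overline{H_2} = \overline{H_1 \cap H_2}\), as desired.

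There is no real obstacle here; the content of the statement has already been packaged into Proposition~\ref{prop:11correspondence}, and the argument is little more than a formal manipulation. The only point worth being careful about is that \(\overline{H_1} \cap \overline{H_2}\) is itself an \emph{open} subgroup, which is what allows the correspondence to be applied to it.
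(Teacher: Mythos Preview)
Your proof is correct and follows essentially the same route as the paper's: both note that \(\overline{H_1} \cap \overline{H_2}\) has finite index in \(\widehat{\Gamma}\), compute \((\overline{H_1} \cap \overline{H_2}) \cap \Gamma = H_1 \cap H_2\) using the inverse correspondence, and then apply the closure direction of Proposition~\ref{prop:11correspondence} to conclude. Your separate treatment of the easy inclusion is harmless but unnecessary, since the bijection already delivers equality outright.
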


  \begin{proof}
    By the proposition \(\overline{H_1} \cap \overline{H_2}\) has finite index in \(\widehat{\Gamma}\) and we get
    \[ (\overline{H_1} \cap \overline{H_2}) \cap \Gamma = (\overline{H_1} \cap \Gamma) \cap (\overline{H_2} \cap \Gamma) = H_1 \cap H_2. \]
    Applying the proposition again yields \(\overline{H_1} \cap \overline{H_2} = \overline{H_1 \cap H_2}\).
  \end{proof}
  
  Note that for a finitely generated, residually finite group \(\Gamma\) there is a canonical choice of a chain
  \[ \Gamma = M_1 \ge M_2 \ge M_3 \ge \cdots \]
  of finite index normal subgroups \(M_n \trianglelefteq \Gamma\) satisfying \(\bigcap_{n=1}^\infty M_n = \{ 1 \}\).  Simply define \(M_n\) to be the intersection of the (finitely many!) normal subgroups of index at most \(n\).  By the last two results, \(\overline{M_n}\) is the intersection of all normal subgroups of \(\widehat{\Gamma}\) with index at most \(n\).

  \begin{proposition} \label{prop:intersectiontrivial}
    The intersection \(\bigcap_{n = 1}^\infty \overline{M_n}\) is trivial.
  \end{proposition}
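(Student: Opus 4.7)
The plan is to reduce the claim to the fact that the open normal subgroups of \(\widehat{\Gamma}\) have trivial intersection (because \(\widehat{\Gamma}\) is Hausdorff), by verifying that the sequence \(\{\overline{M_n}\}\) is cofinal among them. Once this cofinality is established, there is nothing further to do.

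First I would recall the description of the topology: by construction of \(\widehat{\Gamma}\) as an inverse limit, a neighborhood basis at the identity is given by the kernels of the canonical projections \(\widehat{\Gamma} \to \Gamma/N\) for finite index normal subgroups \(N \trianglelefteq \Gamma\); by Proposition~\ref{prop:11correspondence} these kernels are precisely the subgroups \(\overline{N}\). Since \(\widehat{\Gamma}\) is Hausdorff, the intersection of this basis is trivial, i.e.\ \(\bigcap_N \overline{N} = \{1\}\), where \(N\) ranges over all finite index normal subgroups of \(\Gamma\).

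Next comes the cofinality step. Given any finite index normal \(N \trianglelefteq \Gamma\) with \(k = [\Gamma : N]\), the subgroup \(N\) is one of the (finitely many, since \(\Gamma\) is finitely generated) normal subgroups whose intersection defines \(M_k\). Hence \(M_k \subseteq N\), and taking closures in \(\widehat{\Gamma}\) yields \(\overline{M_k} \subseteq \overline{N}\). Therefore
\[
\bigcap_{n=1}^\infty \overline{M_n} \;\subseteq\; \bigcap_{N} \overline{N} \;=\; \{1\},
\]
where the middle intersection is over all finite index normal \(N \trianglelefteq \Gamma\). The reverse inclusion is trivial.

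I do not anticipate a serious obstacle: the real content has been packaged into Proposition~\ref{prop:11correspondence}, while the finite generation of \(\Gamma\) ensures that each \(M_n\) is a finite intersection of finite index subgroups and hence itself of finite index, so that \(\overline{M_n}\) is a legitimate member of the neighborhood basis at \(1\). The only point warranting a sentence of care is that the closures \(\overline{N}\) really do form a basis at the identity for the topology on \(\widehat{\Gamma}\), but this is immediate from the defining property of the inverse-limit topology combined with Proposition~\ref{prop:11correspondence}.
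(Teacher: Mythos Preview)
Your proof is correct, and it is genuinely more direct than the paper's argument. The paper proceeds by a sequential approach: given \(g \in \bigcap_n \overline{M_n}\), it invokes second (hence first) countability of \(\widehat{\Gamma}\) to choose a sequence \(g_i \in \Gamma\) converging to \(g\), then shows via the projections \(\widehat{p}_n\) that for each \(n\) the sequence is eventually in \(M_n\), concludes \(g_i \to 1\) in the profinite topology of \(\Gamma\), and finally appeals to Hausdorffness to get \(g = 1\).

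You bypass the sequential machinery entirely by observing that the family \(\{\overline{M_n}\}\) is cofinal in the neighborhood basis \(\{\overline{N} : N \trianglelefteq \Gamma,\ [\Gamma:N]<\infty\}\) at the identity, so that \(\bigcap_n \overline{M_n} = \bigcap_N \overline{N} = \{1\}\) by Hausdorffness. This is cleaner: it avoids countability altogether and makes transparent that the result is just the standard fact that open normal subgroups in a profinite group intersect trivially, combined with cofinality. The paper's route, on the other hand, rehearses a hands-on argument that is later recycled almost verbatim in the proof of Proposition~\ref{proposition:closureequalscompletion}, so its added length is partly an investment in that subsequent proof.
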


  \begin{proof}
    Let \(g \in \bigcap_{n = 1}^\infty \overline{M_n} \subset \widehat{\Gamma}\).  Since \(\Gamma\) is finitely generated, it has only countably many subgroups of finite index.  Therefore the description of the topology of \(\widehat{\Gamma}\) given above shows that \(\widehat{\Gamma}\) is second and thus first countable.  Hence we can pick a sequence \((g_i)\) from the dense subset \(\Gamma \subset \widehat{\Gamma}\) with \(\lim_{i \rightarrow \infty} g_i = g\).  Let \(p_n \colon \Gamma \rightarrow \Gamma / M_n\) and \(\widehat{p}_n \colon \widehat{\Gamma} \rightarrow \widehat{\Gamma} / \overline{M_n}\) denote the canonical projections.  Since \(\widehat{p}_n\) is continuous, we have
    \[ \overline{M_n} = \widehat{p}_n(g) = \lim_{i \rightarrow \infty} \widehat{p}_n(g_i) \]
    and hence \(\lim_{i \rightarrow \infty} p_n(g_i) = M_n \in \Gamma / M_n\) because \(\widehat{\Gamma} / \overline{M_n} \cong \Gamma / M_n\) by Proposition~\ref{prop:11correspondence}.  As \(\Gamma / M_n\) is discrete, the sequence \(p_n(g_i)\) is eventually constant.  This means that for all \(n \ge 1\) there is \(N \ge 1\) such that for all \(i \ge N\) we have \(p_n(g_i) = M_n\), or equivalently \(g_i \in M_n\).  But the open sets \(M_n\) form a neighborhood basis of \(1 \in \Gamma\) as follows from the description of the profinite topology of \(\Gamma\) given in the proof of Proposition~\ref{prop:profinitetopology}.  So the last statement gives \(\lim_{i \rightarrow \infty} g_i = 1\).  Since \(\widehat{\Gamma}\) (and hence \(\Gamma\)) is Hausdorff, we conclude \(g = 1\).
  \end{proof}

  It follows that \(\widehat{\Gamma}\) is residually finite as an abstract group.  Before we give the proof of Theorem~\ref{thm:mainthm}, we put down one more observation.  If \(H \le \Gamma\) is any subgroup, then the closure \(\overline{H}\) in \(\widehat{\Gamma}\) is a profinite group so that the universal property of \(\widehat{H}\) gives a canonical homomorphism \(\eta \colon \widehat{H} \rightarrow \overline{H}\) which restricts to the identity on \(H\).  This is always an epimorphism because the image is dense, as it contains \(H\), and closed because it is compact and \(\overline{H}\) is Hausdorff.  However, in general we cannot expect that \(\eta\) is injective, not even if \(H\) is finitely generated.  Nevertheless:

  \begin{proposition} \label{proposition:closureequalscompletion}
    If \(H \le \Gamma\) has finite index, then the canonical map \(\eta \colon \widehat{H} \rightarrow \overline{H}\) is an isomorphism.
  \end{proposition}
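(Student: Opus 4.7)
My plan is to exhibit a continuous inverse $\zeta \colon \overline{H} \to \widehat{H}$, invoking the universal property of $\widehat{H}$ (or equivalently of the inverse limit). Surjectivity of $\eta$ has already been dealt with in the paragraph preceding the proposition, so only injectivity remains, and producing such a $\zeta$ will settle it.

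To build $\zeta$, I need for every finite index normal subgroup $K \trianglelefteq H$ a continuous homomorphism $\overline{H} \to H/K$ extending the canonical projection $H \to H/K$. Here is where the finite index hypothesis enters decisively: $[\Gamma : H] < \infty$ forces $K$ itself to have finite index in $\Gamma$, and the normal core construction from the proof of Proposition~\ref{prop:profinitetopology} supplies a finite index normal subgroup $N \trianglelefteq \Gamma$ with $N \le K$. The quotient $\Gamma \to \Gamma / N$ extends uniquely to a continuous homomorphism $\widehat{\Gamma} \to \Gamma / N$; restricting this homomorphism to the closed subgroup $\overline{H}$ gives a continuous map $\overline{H} \to \Gamma / N$ whose image is compact, contains $H/N$, and lies in the finite discrete set $\Gamma / N$, hence equals $H/N$. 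Post-composing with the natural surjection $H/N \twoheadrightarrow H/K$ produces the desired map $\overline{H} \to H/K$.

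Compatibility across varying $K$ is automatic: any two continuous candidates $\overline{H} \to H/K$ landing in the Hausdorff finite group $H/K$ and agreeing on the dense subset $H$ must coincide, which in particular handles refinements $K' \le K$. The universal property of $\widehat{H}$ therefore assembles these maps into a continuous homomorphism $\zeta \colon \overline{H} \to \widehat{H}$ extending the canonical embedding $H \hookrightarrow \widehat{H}$ (injective since $H$ inherits residual finiteness from $\Gamma$).

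Finally, both $\zeta \circ \eta$ and $\eta \circ \zeta$ are continuous self-maps of $\widehat{H}$ and $\overline{H}$ respectively that restrict to the identity on the dense image of $H$, so by Hausdorffness they are the identity, proving $\eta$ is an isomorphism of topological groups. The crux of the argument is the construction in the second paragraph: without $H$ having finite index in $\Gamma$, there is no reason for an arbitrary finite quotient of $H$ to be visible through a finite quotient of $\Gamma$, which is exactly why $\eta$ can fail to be injective in general, as the paragraph preceding the proposition warns.
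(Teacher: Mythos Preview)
Your argument is correct and takes a genuinely different route from the paper's. The paper proves injectivity of $\eta$ directly by a sequential argument: given $h \in \ker \eta$, it uses first countability of $\widehat{H}$ to choose a sequence $h_i \in H$ converging to $h$, shows $h_i \to 1$ in $\overline{H}$, and then translates this back to $h_i \to 1$ in $\widehat{H}$ using exactly the same normal-core observation you use. Your approach instead packages that observation into the construction of an explicit inverse via the universal property of the inverse limit, which is arguably more conceptual and, notably, avoids any appeal to first or second countability---so it would go through verbatim even without the standing hypothesis that $\Gamma$ is finitely generated. One small slip: the clause ``whose image is compact, contains $H/N$, and lies in the finite discrete set $\Gamma/N$, hence equals $H/N$'' does not actually justify the conclusion, since every subset of a finite discrete space is compact. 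The correct reason is that the continuous image of the closure $\overline{H}$ is contained in the closure of the image $H/N$, and the latter equals $H/N$ because $\Gamma/N$ is discrete; equivalently, the preimage of $H/N$ is closed and contains $H$, hence contains $\overline{H}$.
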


  \begin{proof}
    Let \(h \in \ker \eta\).  The group \(H\) is finitely generated because it is a finite index subgroup of \(\Gamma\).  As above we conclude that \(\widehat{H}\) is second and hence first countable.  Since \(H\) lies densely in \(\widehat{H}\), we can thus pick a sequence of elements \(h_i \in H\) such that \(\lim_{i \rightarrow \infty} h_i = h\).  By continuity of \(\eta\), we obtain \(\lim_{i \rightarrow \infty} \eta(h_i) = \eta(h) = 1\) and thus \(\lim_{i \rightarrow \infty} h_i = 1\) in the topology of \(\overline{H}\).  A neighborhood basis of \(1 \in \overline{H}\) is given by the sets \(\overline{M_n} \cap \overline{H}\) where \(\overline{M_n}\) are the finite index normal subgroups of \(\widehat{\Gamma}\) from above.  It follows that for all \(n \ge 1\) there exists \(N \ge 1\) such that for all \(i \ge N\) we have \(h_i \in M_n \cap H\).  Since \(H\) has finite index in \(\Gamma\), it follows that any finite index normal subgroup \(K \trianglelefteq H\) has also finite index as a subgroup of \(\Gamma\).  Thus there exists \(n \ge 1\) such that \(M_n\) lies in the normal core of \(K\) as a subgroup of \(\Gamma\).  Hence for all \(K \trianglelefteq H\) of finite index there exists \(N \ge 1\) such that for all \(i \ge N\) we have \(h_i \in K\).  But the finite index normal subgroups \(K \trianglelefteq H\) form a neighborhood basis of \(1 \in H\) in the profinite topology of \(H\).  Hence we have \(\lim_{i \rightarrow \infty} h_i = 1\) in the topology of \(\widehat{H}\).  Since \(\widehat{H}\) is Hausdorff, we conclude \(h = 1\).
  \end{proof}

  \begin{proof}[Proof of Theorem~\ref{thm:mainthm}.]
  Note that \(\Gamma\) and \(\Lambda\) are finitely generated and residually finite, as a consequence of geometrization \cite{Hempel:ResidualFiniteness}.  We fix an isomorphism \(\widehat{\Gamma} \cong \widehat{\Lambda}\).  Again, let \(M_n \le \Gamma\) be the intersection of all normal subgroups of \(\Gamma\) of index at most \(n\).  By Proposition~\ref{prop:11correspondence} it follows that \(L_n = \Lambda \cap \overline{M_n}\) is the intersection of all normal subgroups of \(\Lambda\) of index at most \(n\) and \([\Gamma : M_n] = [\Lambda : L_n]\).  By Proposition~\ref{prop:intersectiontrivial} we have \(\bigcap_n \overline{M_n} = \{1\}\) so that \(\bigcap_n L_n = \{1\}\).  From Proposition~\ref{proposition:closureequalscompletion} we get \(\widehat{M_n} \cong \widehat{L_n}\) so that Corollary~\ref{cor:h1profinite} implies \(|H_1(M_n)_{\textup{tors}}| = |H_1(L_n)_{\textup{tors}}|\).  A theorem of L\"uck and Schick~\cite{Lueck-Schick:Hyperbolic}*{Theorem~0.7} conjectured in Lott and L\"uck~\cite{Lott-Lueck:3manifolds}*{Conjecture~7.7} shows that \(\rho^{(2)}(\Gamma) = -\vol(\Gamma) / 6 \pi\) and similarly for \(\Lambda\), see also \cite{Lueck:L2Invariants}*{Theorem~4.3, p.\,216}.  If Conjecture~\ref{conj:torsionapprox} holds true, this implies
  \[ \vol(\Gamma) = 6 \pi \lim_{n \rightarrow \infty} \frac{\log |H_1(M_n)_{\textup{tors}}|}{[\Gamma : M_n]} = 6 \pi \lim_{n \rightarrow \infty} \frac{\log |H_1(L_n)_{\textup{tors}}|}{[\Lambda : L_n]} = \vol (\Lambda). \qedhere \]
\end{proof}

\section{Related conjectures} \label{section:relatedconjectures}

\ignore{
Once one asks for the profiniteness of the Euler characteristic, the question naturally arises if its ``even-dimensional cousin'', the \(\ell^2\)-torsion \(\rho^{(2)}(\Gamma)\), is profinite.  Recall that \(\ell^2\)-torsion is per se only defined for groups with finite \(B \Gamma\) which are moreover ``\(\det\)-\(\ell^2\)-acyclic''.  We can slightly broaden the domain of definition for \(\ell^2\)-torsion by setting \(\rho^{(2)}(\Gamma) \coloneqq \frac{\rho^{(2)}(\Gamma_0)}{[\Gamma : \Gamma_0]}\) whenever \(\Gamma\) possesses such a (necessarily torsion-free) subgroup \(\Gamma_0 \le \Gamma\) of finite index.  This is well-defined and the definition now encompasses all arithmetic subgroups of semisimple groups with positive deficiency.

With these remarks a computation of Olbrich~\cite{Olbrich:L2InvariantsLocSym} implies that for an arithmetic subgroup \(\Gamma \le \mathbf{G}(\Q)\) of a semisimple linear algebraic \(\Q\)-group \(\mathbf{G}\) with \(\rank_\Q(\mathbf{G}) = 0\) we have \(\rho^{(2)}(\Gamma) \neq 0\) if and only if \(\delta(\Gamma) = 1\).  Thus for these groups the question of vanishing \(\ell^2\)-torsion being profinite is equivalent to the property \(\delta(\Gamma) \neq 1\) being profinite.  More precisely, Olbrich's calculation gives \(\rho^{(2)}(\Gamma) = t^{(2)}(X) \vol(\Gamma)\) where \(t^{(2)}(X) \in \R\) is an explicit constant.  This constant depends only on the symmetric space \(X = G/K\), where \(K\) is some maximal compact subgroup in \(G = \mathbf{G}(\R)\), and again \(t^{(2)}(X) \neq 0\) if and only if \(\delta(G) = 1\).

In this context, the \emph{Bergeron--Venkatesh conjecture} is of interest.  Let
\[ \Gamma = \Gamma_0 \ge \Gamma_1 \ge \Gamma_2 \ge \cdots \]
be a chain of congruence subgroups such that \(\bigcap_n \Gamma_n = \{ 1 \}\).  Let \(k\) be a number field with ring of integers \(\mathcal{O}_k\).  Fix a finite-dimensional algebraic representation \(W\) of \(\mathbf{G}\) over \(k\) and let \(M \subset W\) be a \(\Gamma\)-invariant \(\mathcal{O}_k\)-lattice.

\begin{conjecture}
For each \(d \ge 1\) there is a constant \(c_{G, M, d} \ge 0\) such that
    \[ \lim_{n \rightarrow \infty} \frac{\log |\textup{tors}\,H_d(\Gamma_n; M)|}{[\Gamma : \Gamma_n]} = c_{G, M, d} \vol(\Gamma) \]
and \(c_{G,M,d} > 0\) if and only if \(\delta(G) = 1\) and \(\dim X = 2d + 1\).
\end{conjecture}

Moreover, there are explicit formulas for the constants \(c_{G,M,d}\).  So the conjecture says that exponential growth of torsion in (twisted) homology can only occur if the deficiency is one; in that case the symmetric space \(X\) has odd dimension and the conjecture says that exponential growth occurs precisely in the middle degree.  Additionally, it predicts the exact exponential growth rate.  The relation to \(\ell^2\)-torsion becomes visible in the case of trivial coefficients, in other words when \(k = \Q\) and \(W\) is a trivial one dimensional representation with some fixed lattice \(M \cong \Z\).  In that case we have
\[ \rho^{(2)}(\Gamma) = (-1)^d\, c_{G, \Z, d} \vol(\Gamma) \]
where \(d = \floor*{\frac{\dim X}{2}}\).  Accordingly, for a general coefficient module \(M\) the conjectured limit should correspond to ``\(M\)-twisted \(\ell^2\)-torsion'', defined properly.

\begin{question} \label{question:bvimpliesvolumeprofinite}
  Suppose the Bergeron--Venkatesh conjecture with trivial coefficients is true for all \(\Q\)-anisotropic semisimple linear algebraic groups \(\mathbf{G}\) with \(\mathbf{G}(\R) \cong_\R \textup{SL}(2; \C)\) or \(\mathbf{G}(\R) \cong_\R \textup{PSL}(2; \C)\).  Would it follow that hyperbolic covolume is profinite among arithmetic subgroups of those \(\mathbf{G}\)?
\end{question}

Here is an equivalent way of asking this question in more geometric terms.

\begin{question}
Under the assumptions of Question~\ref{question:bvimpliesvolumeprofinite}, does it follow that volume of closed, arithmetic hyperbolic 3-manifolds is a profinite invariant of the fundamental group?
\end{question}

We remark that, according to the most common definition, an \emph{arithmetic hyperbolic 3-manifold} is a quotient of hyperbolic 3-space by a torsion-free \emph{arithmetic Kleinian group} where the latter is a discrete subgroup of \(\textup{PSL}(2; \C)\) or \(\textup{SL}(2; \C)\) obtained as follows.  Let \(k\) be a number field possessing exactly one complex place, let \(\mathcal{O}_k\) be its ring of integers and let \(A\) be a quaternion algebra over \(k\) which is ramified at all real places.  For any \(\mathcal{O}_k\)-order \(\mathcal{O}\) of \(A\) with norm-1 subgroup \(\mathcal{O}^1\),  a \(k\)-embedding \(\rho \colon A \rightarrow M_2(\C)\) yields embeddings of the group \(\mathcal{O}_1\) in \(\textup{SL}(2; \C)\) or \(\textup{PSL}(2; \C)\) along \(\rho\) or \(\textup{P}\rho\).  The arithmetic Kleinian groups are the subgroups of \(\textup{PSL}(2; \C)\) or \(\textup{SL}(2; \C)\) commensurable to any such embedded \(\mathcal{O}^1\).  These are discrete subgroups of finite covolume and the cocompact ones are precisely those which are \emph{not} commensurable with a Bianchi group \(\textup{PSL}(2,\mathcal{O}_{\Q(\sqrt{-d})})\).  Actually, the number of cusps in the orbifolds defined by Bianchi groups is exactly the class number of \(\Q(\sqrt{-d})\).

By restriction of scalars from \(k\) to \(\Q\), an arithmetic Kleinian group in the above sense is (commensurable with) an arithmetic subgroup of a group \(\mathbf{G}\) as in Question~\ref{question:bvimpliesvolumeprofinite}.  Conversely, every \(\Q\)-form of \(\textup{PSL}(2; \C)\) is \(\Q\)-isomorphic to some \(A^*/Z(A)^*\), the quotient of the group of units \(A^*\) of a quaternion algebra \(A\) over \(\Q\) by the normal subgroup of central units.  It follows that the class of arithmetic Kleinian groups coincides with the class of arithmetic lattices defined in the usual sense.  Consult Chapter~10 of \cite{Maclachlan-Reid:Arithmetic} for a detailed exposition of this material.

\begin{remark}
Bridson--Reid~\cite{Bridson-Reid:ProfiniteRigidity} proved recently that the figure eight knot complement is distinguished among compact 3-manifolds by the profinite completion of the fundamental group.  It turns out that the figure eight knot is the only knot with an \emph{arithmetic} hyperbolic complement.  As an astronomer's conclusion, one might be tempted to take this as evidence that arithmetic hyperbolic 3-manifolds have a better chance of exhibiting profinite properties than general hyperbolic 3-manifolds.
\end{remark}
}
  
One can find companion conjectures to Conjecture~\ref{conj:torsionapprox} in the literature which likewise predict an exponential rate of torsion growth in homology proportional to volume.  However, these conjectures restrict the aspherical manifolds under consideration in one way or another.  Specifically dealing with 3-manifolds is L\^e's \emph{asymptotic volume conjecture}. 

\begin{introconj} \label{conj:asymptoticvolume}
  Let \(\Gamma\) be the fundamental group of a connected, orientable, irreducible, compact 3-manifold whose boundary is either empty or a collection of tori.  Then
  \[ \limsup_{\Gamma_n \rightarrow \{1\}} \frac{\log |H_1(\Gamma_n)_{\textup{tors}}|}{[\Gamma : \Gamma_n]} = \frac{\vol(\Gamma)}{6 \pi}. \]
\end{introconj}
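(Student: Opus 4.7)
The strategy is to derive Conjecture~\ref{conj:asymptoticvolume} from Conjecture~\ref{conj:torsionapprox} combined with the L\"uck--Schick evaluation of $\ell^2$-torsion, mirroring the reduction used in the proof of Theorem~\ref{thm:mainthm}.  If $\partial M = \emptyset$, the conjecture should follow almost at once.  Indeed, irreducibility together with infinite $\Gamma$ forces $M$ to be aspherical; additivity of $\ell^2$-torsion across the JSJ decomposition together with the L\"uck--Schick formula on hyperbolic pieces and vanishing on Seifert pieces yields $\rho^{(2)}(\Gamma) = -\vol(\Gamma)/(6\pi)$; and Conjecture~\ref{conj:torsionapprox} with $d = 1$ then gives, for any chain $\Gamma_n \trianglelefteq \Gamma$ of finite index with $\bigcap_n \Gamma_n = \{1\}$,
\[ \lim_{n \to \infty} \frac{\log |H_1(\Gamma_n)_{\textup{tors}}|}{[\Gamma : \Gamma_n]} \;=\; -\rho^{(2)}(\Gamma) \;=\; \frac{\vol(\Gamma)}{6\pi}. \]
Since this limit takes the same value along every such chain, the $\limsup$ over all cofinal chains agrees with it.

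When $\partial M$ is a nonempty union of tori, $M$ is compact aspherical but not closed, so Conjecture~\ref{conj:torsionapprox} does not apply directly.  I would attempt a doubling argument: the double $DM = M \cup_{\partial M} M$ is a closed, orientable, irreducible, aspherical 3-manifold whose geometric volume equals $2\,\vol(M)$ by additivity across the incompressible doubling tori, and the closed case above applies to $\pi_1 DM \cong \Gamma *_{\pi_1 \partial M} \Gamma$.  One would then compare torsion growth on chains of $\Gamma$ with chains of $\pi_1 DM$ via Mayer--Vietoris, exploiting that $\pi_1 \partial M$ is virtually $\Z^2$ and therefore contributes only polynomially to first homology.

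The main obstacle is this last passage from $DM$ back to $M$: the subgroup $\Gamma$ does not sit inside $\pi_1 DM$ with finite index, so chains of $\Gamma$ do not transfer straightforwardly to chains of $\pi_1 DM$.  A cleaner but more ambitious route would be to strengthen Conjecture~\ref{conj:torsionapprox} to aspherical manifolds with incompressible toroidal boundary, with a relative $\ell^2$-torsion satisfying $\rho^{(2)}(\Gamma,\pi_1\partial M) = -\vol(\Gamma)/(6\pi)$ closing the argument in the same way.  In either case the conclusion is conditional on Conjecture~\ref{conj:torsionapprox}, and it is worth noting that the $\limsup$ formulation in Conjecture~\ref{conj:asymptoticvolume} is genuinely weaker than the honest limit predicted by Conjecture~\ref{conj:torsionapprox}, so the proposed reduction loses no information.
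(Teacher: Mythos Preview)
The statement you are attempting to prove is \emph{Conjecture}~\ref{conj:asymptoticvolume}, L\^e's asymptotic volume conjecture.  The paper does not prove it and makes no claim to; it is recorded in Section~\ref{section:relatedconjectures} precisely as an open companion to Conjecture~\ref{conj:torsionapprox}, with the remark that L\^e has established the inequality ``$\le$''.  There is therefore no ``paper's own proof'' to compare your proposal against.

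Your proposal is really an argument that Conjecture~\ref{conj:torsionapprox} implies Conjecture~\ref{conj:asymptoticvolume}.  Even on those terms there is a genuine gap.  The $\limsup$ in Conjecture~\ref{conj:asymptoticvolume} is, as the paper spells out just after the statement, the supremum of $\limsup$'s over \emph{all} sequences $(\Gamma_n)$ of finite index normal subgroups with $\limsup_n \Gamma_n = \{1\}$, explicitly including \emph{non-nested} ones.  Conjecture~\ref{conj:torsionapprox} speaks only about nested chains.  Knowing that the limit along every chain equals $\vol(\Gamma)/6\pi$ gives you the lower bound on the overall $\limsup$, but it does not rule out a non-nested sequence with strictly larger $\limsup$; for that you must invoke L\^e's theorem (the ``$\le$'' half), which you never cite.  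Your closing remark that the $\limsup$ formulation is ``genuinely weaker'' than the limit in Conjecture~\ref{conj:torsionapprox} is thus misleading: over the wider class of sequences it is not automatic, and this is exactly the content of L\^e's result.  The boundary case is a separate, acknowledged obstacle: Conjecture~\ref{conj:torsionapprox} is stated only for closed aspherical manifolds, and neither your doubling sketch nor the hoped-for relative version is available, so no reduction of the toroidal-boundary case to Conjecture~\ref{conj:torsionapprox} is currently on the table.
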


The conjecture appears in \cite{Le:GrowthOfHomology}*{Conjecture~1\,(a)}.  The volume \(\vol(\Gamma)\) is defined by a geometric decomposition as before which also exists for toroidal boundary.  The \(\limsup\) on the left hand side is defined as the lowest upper bound of all \(\limsup\)s along sequences \((\Gamma_n)\) of (not necessarily nested!) finite index normal subgroups of \(\Gamma\) with \(\limsup_n \Gamma_n = \{ 1 \}\).  Recall that by definition
\[ \limsup\nolimits_n \Gamma_n = \bigcap_{N \ge 0} \bigcup_{n \ge N} \Gamma_n \]
so that the condition \(\limsup_n \Gamma_n = \{ 1 \}\) is actually equivalent to requiring
\[ \lim_{n \rightarrow \infty} \tr_{\C[\Gamma / \Gamma_n]}(g\Gamma_n) = \tr_{\C[\Gamma]}(g) = \begin{cases} 1 \text{ if } g = e,\\
  0 \text{ otherwise,} \end{cases} \]
for all \(g \in \Gamma\) where the traces are the usual traces of group algebras given by the unit coefficient.

\begin{question} \label{question:asymptoticvolumeimpliesvolumeprofinite}
Does Conjecture~\ref{conj:asymptoticvolume} imply Conjecture~\ref{conj:volumeprofinite}?
\end{question}

The proof of Theorem~\ref{thm:mainthm} does not immediately carry over to Question~\ref{question:asymptoticvolumeimpliesvolumeprofinite} as \(\limsup_n \Gamma_n = \{ 1 \}\) for some sequence \((\Gamma_n)\) does not imply \(\limsup \Lambda_n = \{ 1 \}\) for the groups \(\Lambda_n = \Lambda \cap \overline{\Gamma_n}\).  Here is an example.

\begin{example}
  Let \(\Gamma = \Z \times \Z\) with (nested) chain of subgroups \(\Gamma_n = 2^n \Z \times 3^n \Z\).  Clearly, we have \(\widehat{\Gamma} = \widehat{\Z} \times \widehat{\Z}\).  From the description \(\widehat{\Z} \cong \prod_p \Z_p\) it is apparent that \([\widehat{\Z} : N \widehat{\Z}] = N\) for any \(N \ge 1\).  Since \(N\Z\) is the only subgroup of index \(N\) in~\(\Z\), Proposition~\ref{prop:11correspondence} implies that \(\overline{N\Z} = N \widehat{\Z}\).  Thus we have \(\overline{\Gamma_n} = 2^n \widehat{\Z} \times 3^n \widehat{\Z}\).  It follows that
  \[ \bigcap_{n = 1}^\infty \overline{\Gamma_n} \cong \left( \{0\} \times \prod_{p > 2} \Z_p \right) \times \left(\Z_2 \times \{0\} \times \prod_{q > 3} \Z_q \right) \le \widehat{\Z} \times \widehat{\Z}. \]
  So if we let \(\Lambda \le \widehat{\Gamma}\) be the subgroup generated by the two elements
  \[ ((0,1,1,\ldots), (1,0,0,0,\ldots)) \quad \text{and} \quad ((1,0,0,0\ldots),(0,1,1,1,\ldots))\]
  in \(\prod_p \Z_p \times \prod_p \Z_p \cong \widehat{\Z} \times \widehat{\Z}\), then clearly \(\Lambda \cong \Z \times \Z\) is dense in \(\widehat{\Gamma}\) so that the canonical map \(\widehat{\Lambda} \rightarrow \overline{\Lambda} = \widehat{\Gamma}\) is a surjective homomorphism of isomorphic finitely generated profinite groups.  Hence it must be an isomorphism~\cite{Ribes-Zalesskii:ProfiniteGroups}*{Proposition~2.5.2, p.\,46}.  However, we have \(\bigcap_{n = 1}^\infty \Lambda_n \neq \{0\}\) even though \(\bigcap_{n=1}^\infty \Gamma_n = \{0\}\).
\end{example}

We remark that L{\^e} has proven the inequality ``\(\le\)'' of Conjecture~\ref{conj:asymptoticvolume}, even if the subgroups are not required to be normal.  Another conjecture, which leaves both the realm of 3-manifolds and of normal subgroups, is due to Bergeron and Venkatesh~\cite{Bergeron-Venkatesh:AsymptoticGrowth}*{Conjecture~1.3}.  It does however assume a somewhat rigorous arithmetic setting.  This is what we want to present next.

Let \(\mathbf{G}\) be a semisimple algebraic group, defined and anisotropic over \(\Q\).  Let \(\Gamma \le \mathbf{G}(\Q)\) be a \emph{congruence subgroup}.  This means that for some (and then for any) \(\Q\)-embedding \(\rho \colon \mathbf{G} \rightarrow \textup{GL}_n\) there is \(k \ge 1\) such that the group \(\rho(\Gamma)\) contains the kernel of \(\rho(\mathbf{G}) \cap \textup{GL}_n(\Z) \rightarrow \textup{GL}_n(\Z/k\Z)\) as a subgroup of finite index.  Fix an algebraic representation of \(\mathbf{G}\) on a finite-dimensional \(\Q\)-vector space \(W\) and let \(M \subset W\) be a \(\Gamma\)-invariant \(\Z\)-lattice, which always exists according to \cite{Platonov-Rapinchuk:AlgebraicGroups}*{Remark, p.\,173}.  Let \(\Gamma = \Gamma_0 \ge \Gamma_1 \ge \cdots\) be a chain of congruence subgroups with \(\bigcap_n \Gamma_n = \{1\}\).  For a maximal compact subgroup \(K\) of \(G = \mathbf{G}(\R)\), we denote by \(X = G / K\) the \emph{symmetric space} associated with \(\mathbf{G}\).  Let \(\mathfrak{g}\) and \(\mathfrak{k}\) be the Lie algebras of \(G\) and \(K\) and let \(\delta(G) = \rank_{\C} \,\mathfrak{g} \otimes \C - \rank_{\C} \,\mathfrak{k} \otimes \C\) be the \emph{deficiency} of \(G\), sometimes also known as the \emph{fundamental rank} \(\delta(X)\) of \(X\).

\begin{introconj} \label{conj:bergeronvenkatesh}
  For each \(d \ge 1\) there is a constant \(c_{G, M, d} \ge 0\) such that
    \[ \lim_{n \rightarrow \infty} \frac{\log |H_d(\Gamma_n; M)_\textup{tors}|}{[\Gamma : \Gamma_n]} = c_{G, M, d} \vol(\Gamma) \]
and \(c_{G,M,d} > 0\) if and only if \(\delta(G) = 1\) and \(\dim X = 2d + 1\).
\end{introconj}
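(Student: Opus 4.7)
The plan is to attack the conjecture through the analytic route via Reidemeister and Ray--Singer torsion.  First I would fix the symmetric space \(X = G/K\) and form the compact (since \(\mathbf{G}\) is \(\Q\)-anisotropic) locally symmetric quotients \(Y_n = \Gamma_n \backslash X\) equipped with the flat bundle induced by \(M \otimes \R\).  The Cheeger--M\"uller / Bismut--Zhang theorem identifies the combinatorial Reidemeister torsion \(\tau(Y_n; M)\) with the Ray--Singer analytic torsion \(T(Y_n; M)\).  Combinatorially, \(\log \tau(Y_n; M)\) decomposes as an alternating sum \(\sum_d (-1)^d \log |H_d(\Gamma_n; M)_\textup{tors}|\) plus a regulator correction coming from the free parts of the \(H_d(\Gamma_n; M)\).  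Controlling these regulators is thus a necessary preliminary step, presumably via volume bounds on rational cohomology classes coming from the shape of fundamental domains.

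The next step would be to show that the analytic torsion scales linearly with volume in the limit.  Specifically, I would seek an approximation statement of the form
\[ \frac{T(Y_n; M)}{[\Gamma : \Gamma_n]} \;\longrightarrow\; t^{(2)}(X, M)\,\vol(\Gamma), \]
where the right-hand side is the \(\ell^2\)-analytic torsion density of \(X\) with coefficients in \(M \otimes \R\).  This requires a L\"uck-style spectral approximation: the heat-kernel traces of the twisted form-Laplacians on \(Y_n\), normalized by covolume, should converge to the von Neumann trace of the corresponding operator on \(X\).  Olbrich's calculation of \(\ell^2\)-analytic torsion on locally symmetric spaces then pins down the constant and shows \(t^{(2)}(X, M) \neq 0\) precisely when the deficiency \(\delta(G) = 1\).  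Combined with vanishing of \(\ell^2\)-Betti numbers outside the middle degree when \(\delta(G) = 1\) (Borel), the only surviving contribution to the limit is the middle-dimensional torsion, so \(c_{G, M, d} > 0\) exactly when \(\delta(G) = 1\) and \(\dim X = 2d + 1\).

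The hard part will be twofold.  First, passing from the alternating sum supplied by analytic torsion to the single middle-degree term in the conjecture requires showing that off-middle torsion grows subexponentially; this is known when \(M\) is \emph{strongly acyclic} (no small Laplacian eigenvalues and homology purely torsion), but is open for trivial or near-trivial coefficients because spectra of Laplacians on functions may accumulate near zero along the tower.  Second, the spectral approximation needs uniform control of small eigenvalues and large-time heat-kernel contributions, which for arithmetic towers is essentially a quantitative limit multiplicity problem for automorphic representations.  Both obstacles are the reason the full conjecture remains open: Bergeron--Venkatesh establish a Ces\`aro-averaged version for strongly acyclic \(M\) by precisely these arguments, whereas the trivial-coefficient and non-strongly-acyclic cases await genuinely new spectral input.
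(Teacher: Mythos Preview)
The statement you have been asked to prove is a \emph{conjecture}, not a theorem.  The paper does not supply a proof; it merely records the Bergeron--Venkatesh conjecture, explains the normalisation of \(\vol(\Gamma)\), and remarks that for the trivial coefficient system \(\Z \subset \Q\) the conjecture ``is wide open'', while for strongly acyclic \(M\) and \(G = \textup{SL}(2,\C)\) Bergeron and Venkatesh themselves verified it.  There is therefore no proof in the paper against which to compare your proposal.

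Your write-up in fact acknowledges this: you correctly identify the regulator control and the small-eigenvalue problem as the two unresolved obstacles, and you note that only the strongly acyclic case has been handled.  That is an accurate summary of the state of the art, and the analytic route you outline (Cheeger--M\"uller, heat-kernel approximation, Olbrich's computation of the \(\ell^2\)-torsion constant) is precisely the Bergeron--Venkatesh strategy.  But it is not a proof: the steps you flag as ``hard'' are genuinely open, and nothing in your sketch supplies the missing spectral input.  So as a proof proposal it has a concrete gap---namely, the conjecture itself---and the paper makes no claim to fill it.
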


In this case the volume \(\vol(\Gamma)\) is the volume of the closed locally symmetric space \(\Gamma \backslash X\) which is defined by means of a Haar measure on \(G\) and as such only unique up to scaling.  But any rescaling of this measure would also rescale the constant \(c_{G, M, d}\) by the reciprocal value so that the product is well-defined.  To make sure that \(c_{G, M, d}\) really only depends on \(G\), \(M\), and \(d\), we agree upon the following normalization of the Haar measure.  The Killing form on \(\mathfrak{g}\) restricts to a positive definite form on the subspace \(\mathfrak{p}\) in the orthogonal Cartan decomposition \(\mathfrak{g} = \mathfrak{k} \oplus \mathfrak{p}\).  Identifying \(\mathfrak{p}\) with the tangent space \(T_K X\), we obtain a \(G\)-invariant metric on \(X\) by translation.  We require that the volume of \(\Gamma \backslash X\) determined by Haar measure be equal to the volume of \(\Gamma \backslash X\) as Riemannian manifold.

To relate Conjecture~\ref{conj:bergeronvenkatesh} to Conjecture~\ref{conj:volumeprofinite}, we need to restrict our attention to \emph{arithmetic hyperbolic 3-manifolds}.  These are quotients of hyperbolic 3-space~\(\mathbb{H}^3\) by \emph{arithmetic Kleinian groups}.  A \emph{Kleinian group} is a discrete subgroup \(\Gamma \le \textup{PSL}(2, \C) \cong \textup{Isom}^+(\mathbb{H}^3)\) such that \(\vol(\Gamma) = \vol (\Gamma \backslash \mathbb{H}^3) < \infty\).  A Kleinian group \(\Gamma \le \textup{PSL}(2, \C)\) is called \emph{arithmetic} if there exists a semisimple linear algebraic \(\Q\)-group \(\mathbf{H} \le \textup{GL}_n\) and an epimorphism of Lie groups \(\phi \colon \mathbf{H}(\R)^0 \rightarrow \textup{PSL}(2, \C)\) with compact kernel such that \(\Gamma\) is commensurable with \(\phi(\mathbf{H}(\Z) \cap \mathbf{H}(\R)^0)\).  Here \(\mathbf{H}(\R)^0\) denotes the unit component and two subgroups of a third group are called commensurable if their intersection has finite index in both subgroups.  Note that we consider \(\textup{PSL}(2, \C)\) as a real Lie group so that the complexified lie algebra is \(\mathfrak{sl}(2, \C) \oplus \mathfrak{sl}(2, \C)\) and hence \(\delta(\textup{PSL}(2, \C)) = 1\).  There is an alternative and equivalent approach to the definition of arithmetic Kleinian groups via orders in quaternion algebras over number fields~\cite{Maclachlan-Reid:Arithmetic}.

\begin{question}
  Let \(\Gamma\) and \(\Lambda\) be arithmetic Kleinian groups such that \(\widehat{\Gamma} = \widehat{\Lambda}\).  Suppose Conjecture~\ref{conj:bergeronvenkatesh} holds true.  Can we conclude that \(\vol(\Gamma) = \vol(\Lambda)\)?
  \end{question}

Again, various problems arise when trying to adapt the proof of Theorem~\ref{thm:mainthm} to settle this question in the affirmative.  To be more concrete, a direct translation fails for the following reason.  Let \(M_n\) be the intersection of all normal subgroups of index at most \(n\) in the arithmetic group \(\mathbf{H}(\Z)\) corresponding to \(\Gamma\) as above.  Then \(M_n\) will not consist of congruence subgroups.  In fact, \(\mathbf{H}(\Z)\) has the \emph{congruence subgroup property} if and only if all the groups \(M_n\) are congruence subgroups.  But the congruence subgroup property is well known to fail for all arithmetic Kleinian groups~\cite{Lubotzky:GroupPresentations}.  Instead, one could try to start with a chain of congruence subgroups \(\Gamma_n\) of \(\Gamma\) but then it seems unclear if or under what circumstances the chain \(\Lambda_n = \overline{\Gamma_n} \cap \Lambda\) consists of congruence subgroups in \(\Lambda\).

We remark that for the trivial coefficient system \(\Z \subset \Q\), Conjecture~\ref{conj:bergeronvenkatesh} is wide open.  However, in our relevant case of \(\delta(G) = 1\), Bergeron and Venkatesh construct \emph{strongly acyclic} coefficient modules \(M\) with the property that the spectrum of the Laplacian acting on \(M \otimes_\Z \C\)-valued \(p\)-forms on \(\Gamma_n \backslash X\) is bounded away from zero for all \(p\) and \(n\).  In the special case \(G = \textup{SL}(2, \C)\), they show that Conjecture~\ref{conj:bergeronvenkatesh} holds true for any strongly acyclic \(M\).

\begin{bibdiv}[References]
  \begin{biblist}
    
\bib{Bergeron-Venkatesh:AsymptoticGrowth}{article}{
   author={Bergeron, N.},
   author={Venkatesh, A.},
   title={The asymptotic growth of torsion homology for arithmetic groups},
   journal={J. Inst. Math. Jussieu},
   volume={12},
   date={2013},
   number={2},
   pages={391--447},
   issn={1474-7480},
   review={\MRref{3028790}{}},
}

   \bib{Boileau-Friedl:ProfiniteCompletions}{article}{
     author={Boileau, M.},
     author={Friedl, S.},
     title={Profinite completions and 3-manifold groups},
     journal={RIMS Kokyuroku},
     volume={1991},
     pages={54--68},
     date={2016},
     note={\url{http://hdl.handle.net/2433/224629}},
   }

   \bib{Bridson-Reid:ProfiniteRigidity}{article}{
     author={Bridson, M.},
     author={Reid, A.},
     title={Profinite rigidity, fibering, and the figure-eight knot},
     note={e-print},
     date={2015},
     review={\arXiv{1505.07886}},
   }

  \bib{Funar:TorusBundles}{article}{
   author={Funar, L.},
   title={Torus bundles not distinguished by TQFT invariants},
   note={With an appendix by Funar and Andrei Rapinchuk},
   journal={Geom. Topol.},
   volume={17},
   date={2013},
   number={4},
   pages={2289--2344},
   issn={1465-3060},
   review={\MRref{3109869}{}},
  }

  \bib{Hempel:ResidualFiniteness}{article}{
   author={Hempel, J.},
   title={Residual finiteness for $3$-manifolds},
   conference={
      title={Combinatorial group theory and topology},
      address={Alta, Utah},
      date={1984},
   },
   book={
      series={Ann. of Math. Stud.},
      volume={111},
      publisher={Princeton Univ. Press, Princeton, NJ},
   },
   date={1987},
   pages={379--396},
   review={\MRref{895623}{}},
}
      
   \bib{Hempel:Some3ManifoldGroups}{article}{
     author={Hempel, J.},
     title={Some 3-manifold groups with the same finite quotients},
     note={e-print},
     date={2014},
     review={\arXiv{1409.3509}},
   }
   
      \bib{Kammeyer:IntroL2}{book}{
     author={Kammeyer, H.},
     title={Introduction to \(\ell^2\)-invariants},
     series={lecture notes},
     note={Available for download at \url{http://topology.math.kit.edu/21_679.php}},
     date={2018},
   }

   \bib{Le:GrowthOfHomology}{article}{
     author={L\^e, T.\,T.\,Q.},
     title={Growth of homology torsion in finite coverings and hyperbolic volume},
     note={e-print},
     date={2014},
     review={\arXiv{1412.7758}},
   }

\bib{Lott-Lueck:3manifolds}{article}{
   author={Lott, J.},
   author={L\"uck, W.},
   title={$L^2$-topological invariants of $3$-manifolds},
   journal={Invent. Math.},
   volume={120},
   date={1995},
   number={1},
   pages={15--60},
   issn={0020-9910},
   review={\MRref{1323981}{}},
}

\bib{Lubotzky:GroupPresentations}{article}{
   author={Lubotzky, A.},
   title={Group presentation, $p$-adic analytic groups and lattices in ${\rm SL}_{2}({\bf C})$},
   journal={Ann. of Math. (2)},
   volume={118},
   date={1983},
   number={1},
   pages={115--130},
   issn={0003-486X},
   review={\MRref{707163}{}},
}

\bib{Lueck:GAFA}{article}{
   author={L\"uck, W.},
   title={Approximating $L^2$-invariants and homology growth},
   journal={Geom. Funct. Anal.},
   volume={23},
   date={2013},
   number={2},
   pages={622--663},
   issn={1016-443X},
   review={\MRref{3053758}{}},
}

\bib{Lueck:L2Invariants}{book}{
   author={L\"uck, W.},
   title={\(L^2\)-invariants: theory and applications to geometry and \(K\)-theory},
   series={Ergebnisse der Mathematik und ihrer Grenzgebiete. 3. Folge. A
   Series of Modern Surveys in Mathematics},
   volume={44},
   publisher={Springer-Verlag, Berlin},
   date={2002},
   pages={xvi+595},
   isbn={3-540-43566-2},
   review={\MRref{1926649}{}},
}

\bib{Lueck-Schick:Hyperbolic}{article}{
   author={L\"uck, W.},
   author={Schick, T.},
   title={$L^2$-torsion of hyperbolic manifolds of finite volume},
   journal={Geom. Funct. Anal.},
   volume={9},
   date={1999},
   number={3},
   pages={518--567},
   issn={1016-443X},
   review={\MRref{1708444}{}},
}

\bib{Maclachlan-Reid:Arithmetic}{book}{
   author={Maclachlan, C.},
   author={Reid, A.\,W.},
   title={The arithmetic of hyperbolic 3-manifolds},
   series={Graduate Texts in Mathematics},
   volume={219},
   publisher={Springer-Verlag, New York},
   date={2003},
   pages={xiv+463},
   isbn={0-387-98386-4},
   review={\MRref{1937957}{}},
}

\bib{Nikolov-Segal:ProfiniteGroups}{article}{
   author={Nikolov, N.},
   author={Segal, D.},
   title={On finitely generated profinite groups. I. Strong completeness and
   uniform bounds},
   journal={Ann. of Math. (2)},
   volume={165},
   date={2007},
   number={1},
   pages={171--238},
   issn={0003-486X},
   review={\MRref{2276769}{}},
}

\bib{Platonov-Rapinchuk:AlgebraicGroups}{book}{
   author={Platonov, V.},
   author={Rapinchuk, A.},
   title={Algebraic groups and number theory},
   series={Pure and Applied Mathematics},
   volume={139},
   note={Translated from the 1991 Russian original by Rachel Rowen},
   publisher={Academic Press, Inc., Boston, MA},
   date={1994},
   pages={xii+614},
   isbn={0-12-558180-7},
   review={\MRref{1278263}{}},
}

\bib{Reid:ProfiniteProperties}{article}{
   author={Reid, A.\,W.},
   title={Profinite properties of discrete groups},
   conference={
      title={Groups St Andrews 2013},
   },
   book={
      series={London Math. Soc. Lecture Note Ser.},
      volume={422},
      publisher={Cambridge Univ. Press, Cambridge},
   },
   date={2015},
   pages={73--104},
   review={\MRref{3445488}{}},
}

   \bib{Ribes-Zalesskii:ProfiniteGroups}{book}{
   author={Ribes, L.},
   author={Zalesskii, P.},
   title={Profinite groups},
   series={Ergebnisse der Mathematik und ihrer Grenzgebiete. 3. Folge. A
   Series of Modern Surveys in Mathematics},
   volume={40},
   publisher={Springer-Verlag, Berlin},
   date={2000},
   pages={xiv+435},
   isbn={3-540-66986-8},
   review={\MRref{1775104}{}},
   }

   \bib{Wilkes:ProfiniteRigidity}{article}{
   author={Wilkes, G.},
   title={Profinite rigidity for Seifert fibre spaces},
   journal={Geom. Dedicata},
   volume={188},
   date={2017},
   pages={141--163},
   issn={0046-5755},
   review={\MRref{3639628}{}},
}
   
\end{biblist}
\end{bibdiv}

\end{document}